\newtheorem{remark}{Remark}[section]
\newtheorem{theorem}{Theorem}[section]
\newtheorem{proposition}{Proposition}[section]
\newtheorem{lemma}{Lemma}[section]
\newcommand{\Ga}{\Gamma}
\newcommand{\ga}{\gamma}
\newcommand{\BB}{{\mathbb B}}
\newcommand{\XX}{{\mathbb X}}
\newcommand{\R}{{\mathbb R}}
\newcommand{\N}{{\mathbb N}}
\newcommand{\Z}{{\mathbb Z}}
\newcommand{\X}{{\R^d}}
\newcommand{\XXX}{{\mathfrak{X}}}
\newcommand{\La}{\Lambda}
\newcommand{\la}{\lambda}
\newcommand{\B}{\mathcal{B}}
\newcommand{\M}{\mathcal{M}}
\newcommand{\CB}{\mathcal B}
\begin{document}

\title{Invariant measures for contact processes with state dependent birth and death rates. }

\author{Sergey Pirogov\thanks{
Institute for Information Transmission Problems, Moscow, Russia
(s.a.pirogov@bk.ru).} \and Elena Zhizhina\thanks{Institute for
Information Transmission Problems, Moscow, Russia (ejj@iitp.ru).} }

\date{}

\maketitle

\begin{abstract}
In this paper, we consider contact processes on locally compact separable metric spaces with birth and death rates heterogeneous in space. Conditions on the rates that ensure the existence of invariant measures of contact processes  are formulated.
One of the crucial condition is the so-called critical regime condition. To prove the existence of invariant measures we used our approach proposed in \cite{PZh}. We discuss in details the multi-species contact model with a compact space of marks (species) in which both birth and death rates depend on the marks.
\\

Keywords: multi-species continuous contact model, birth and death process in continuum, critical regime, correlation functions
\end{abstract}

\section{Introduction}

Contact processes have been widely used to describe evolutions and to predict a long-time behaviour in various models of population dynamics. Taking into account applications of the contact processes as models describing a spread of epidemic diseases or a population growth, one of the main problem under consideration is to determine the stationary regime and to prove the existence of invariant measures.
The contact processes on the lattice have been introduced in the pioneer papers of Harris \cite{H}, Holley and Liggett \cite{HL}, see also the monograph of Liggett \cite{Lig1985}. While in most of the works the contact processes were considered on the lattice, much of the interest in the recent years has focused on studying the contact processes running in continuous spaces, see e.g. \cite{KKP, KKPZ, KS}.
This class of processes is a particular case of continuous time birth and death processes.
One of the main features of the contact process is the clustering of the system, i.e. particles are grouped into clouds of high density located at large distances from each other.  It is worth noting that the appearance of a limiting invariant state is only possible in the so-called critical regime, that is, when there is a certain balance between birth and death.
As was shown in \cite{KKP}, there exists a continuum of invariant measures for the contact processes in $\X, \ d\ge 3,$ in the critical regime with a constant death rates. Clustering phenomenon is more visible in the case of small dimensions. It was proved in \cite{KKPZ} that for the contact processes in $\X, \ d= 1,2,$ invariant measures exist only if the dispersal kernel (the birth rate) has a heavy tail at infinity. In the case of light tails the pair correlation function grows to infinity as $t \to \infty$, and hence invariant measures do not exist. Thus heavy tails of dispersal kernels appear to make the critical regime more stable contrary to light tails.

The existence of invariant measures in the marked contact model in $\X, \ d\ge 3,$ with a compact spin space and for constant death rates was proved in \cite{KPZh}. The analogous multi-species model with immigration has been studied in \cite{PZh-MMJ}. Such models are used, in particular, to describe evolution in quasi-species populations with mutations, see \cite{Nov}.


In this work, we consider a class of contact processes running on locally compact separable metric spaces in the critical regime with state dependent birth and death rates.
The present paper is a generalization of our previous work \cite{PZh}, in which we formulated conditions providing the existence of invariant measures for contact processes on general spaces with constant death rates.
These invariant measures are described by a simple recurrent relation between their correlation functions and create a new class of point random fields.
Here we formulate conditions, including the critical regime condition, that ensure the existence of invariant measures for general contact processes with state dependent birth and death rates, and we prove the existence of a family of invariant measures.

Our approach is based on the analysis of the infinite system of hierarchical equations for correlation functions,  see e.g. \cite{KKP, PZh}. In Section 2, we introduce the model and formulate assumptions on the model, including the critical regime condition.
We formulate the main result in Section 3. Section 4 contains the proof of the main theorem. In Section 5 we apply general results of Sect. 2 - 3 to the analysis of multi-component contact models in continuum with a compact space of marks and state dependent birth and death rates.

\section{The model}

In this section, we formulate assumptions on the model that provide the existence of invariant measures for the contact processes running in general spaces.

Let $\XXX$ be a locally compact separable metric space, ${\B}({\XXX})$ be its Borel $\sigma$-algebra,
and $m$ will denote a locally finite  Borel measure on ${\B} (\XXX)$, i.e. $m$ is finite on compact sets.
Denote by $\M(\XXX)$ the space of locally finite Borel measures on $\B(\XXX)$ and by ${\B}_{\mathrm{b}} ({\XXX})$
the system of all compact sets from ${\B}({\XXX})$.

A configuration $\gamma \in \Ga(\XXX)$ on $\XXX$ is a finite or countably infinite locally finite
unordered set of points in $\XXX$, and some of them can be multiple, i.e. repetitions are permitted.
If the measure $m \in \M(\XXX)$ is atomic then  $\Ga(\XXX)$ includes configurations with multiple points. Such situation will be on graphs were the measure $m$ is a counting measure.
As the phase space $\Ga$ of the continuous contact models, when $m$ is non-atomic (see e.g. \cite{KKP, KKPZ, KS}),
one can take the set of locally finite configurations in $\XXX$ with distinct elements:
\begin{equation} \label{confspace}
\Ga_c =\Ga_c\bigl(\XXX\bigr) :=\Bigl\{ \ga \subset \XXX \Bigm| \ | \ga\cap\La| < \infty, \ \mathrm{for \ all } \ \La \in {\B}_{\mathrm{b}}
(\XXX)\Bigr\},
\end{equation}
where $|\cdot|$ denotes the number of elements of a set.

We can identify each $\ga\in\Ga$ with an integer-valued measure $\sum_{x\in
\gamma }\delta_x\in \M(\XXX)$, where $\delta_x$ is
the Dirac measure with unit mass, and the sum is taken considering the multiplicity of elements in the configuration $\gamma$.
For any $ \La \in {\B}_{\mathrm{b}}(\XXX)$ we denote by $|\gamma \cap \La|$ the value $\gamma(\La)$ of the measure $\gamma$ on $\Lambda$.

The contact model is a continuous time Markov process on $\Gamma(\XXX)$ which is a
particular case of a general birth-and-death process. In this work we consider the process with non-homogeneous birth and death intensities.  The model is given by a heuristic
generator defined on a proper class of functions  $F:
\Gamma \to \R$ as follows:
\begin{align}
 \begin{aligned}\label{generator}
(L F)(\gamma) & = \sum_{ x \in \gamma} V(x) \left(F(\gamma \backslash x) - F(
\gamma)\right) \\
& +   \int\limits_{\XXX} \sum_{x \in \gamma}  a(y,x) (F(\gamma \,\cup
y ) - F(\gamma)) m(dy).
 \end{aligned}
 \end{align}
Notations $\gamma \backslash x$ and $\gamma \,\cup x$  in \eqref{generator} stand for removing and adding one particle at position $x \in \XXX$. Similarly, $x \in \gamma$ refers to any particle in the configuration $\gamma$.
The first term in \eqref{generator} corresponds to the death of a particle at position $x$: each element $x \in \gamma$ of the configuration $\gamma\in\Ga$ can die with the death rate $V(x)$. The second term of \eqref{generator} describes the birth of a new particle in a neighborhood $dy$ of the point $y$ with the
birth rate density $A(y,\gamma):=\sum_{x \in \gamma} a(y,x)$. In fact, we even do not know who is a parent, since the birth of a new particle at position $y$ has a cumulative rate $\sum_{x \in \gamma}  a(y,x)$.  Function $a(x,y)$ is called the dispersal kernel.
\medskip

One of the goals of this work is to formulate conditions on the rates in \eqref{generator} that guarantee the existence of the invariant measures of the corresponding contact processes. Crucial condition for the existence of the stationary regime in the contact process, as well as in other birth and death processes, is the so-called critical regime condition. This condition describes a "stochastic balance between birth and death".

In the case $V(x) \equiv 1$, i.e. when the death rate is a constant, the critical regime condition reads
\begin{equation}\label{cr1}
 \int\limits_{\XXX} a(x,y) \, \Psi(y) \, m(dy) \ = \  \Psi(x) \quad \mbox{for all } x \in \XXX,
\end{equation}
where $\Psi(x)$ is a strictly positive bounded measurable function.
Models under the critical regime condition in the form \eqref{cr1} were studied earlier, for $\Psi(x) \equiv 1$ in \cite{KKP} and for $\Psi(s)$ depending on the species variable $s$ in \cite{PZh}.
In \cite{KKP}, the contact model in continuum with $\XXX = {\mathbb{R}}^d$, $a(x,y) = \alpha(x-y)$ and $V(x) \equiv 1$ was considered, and for this model $\Psi(x) \equiv 1$. A more general case, when  $\XXX = {\mathbb{R}}^d \times S$ with a compact metric space $S$ (the space of species),  was studied in \cite{PZh}. Then we got that condition \eqref{cr1} holds for a function  $\Psi(x)= q(s)$  depending only on the species variable $s \in S$. The similar multi-species continuous contact model with a non constant function $V(s), \, s \in S,$  will be discussed later in Section \ref{MS}.
\medskip

In the case, when $V(x): \XXX \to  {\mathbb{R}}_+$ is a positive bounded function,
{\bf the critical regime} condition becomes the following: there exists a strictly positive bounded measurable function $\Psi(x), \; \Psi(x) \ge p_0>0$ such that
\begin{equation}\label{2a}
 \int\limits_{\XXX} a(x,y)\, \Psi(y) \, m(dy) \ = \ V(x) \, \Psi(x) \quad \mbox{for all } x \in \XXX.
\end{equation}
This condition is a detailed balance condition. It means that if the particles are initially distributed with the density $ \Psi(x)$, then this density is conserved under the dynamics, i.e. $ \Psi(x)$ is the density of the stationary distribution of particles.
Let us discuss the new form \eqref{2a} of the critical regime condition.
Condition \eqref{2a} can be rewritten in the following form:
\begin{equation}\label{cr}
 \int\limits_{\XXX} b(x,y)  \bar{\mathfrak{m}}(dy) \ = \ V(x) \quad \mbox{for all } x \in \XXX,
\end{equation}
with
\begin{equation}\label{nb}
\bar{\mathfrak{m}}(dy)  = \Psi(y) m(dy), \quad b(x,y) = \frac{a(x,y)}{\Psi(x)}.
\end{equation}
Consequently, assuming that the critical regime condition \eqref{2a} holds for the initial generator \eqref{generator} (i.e. for $a(x,y)$ and  $m(dy)$)  we can define a new measure
$$
\bar{\mathfrak{m}}(dy) = \Psi(y) m(dy)
$$
and a new intensity of birth
$$
b(x,y) = \frac{a(x,y)}{\Psi(x)},
$$
such that  \eqref{2a} will be rewritten as \eqref{cr}, and the generator \eqref{generator} will take the form
\begin{align}
 \begin{aligned}\label{generator-bis}
(L F)(\gamma) & = \sum_{ x \in \gamma} V(x) \left(F(\gamma \backslash x) - F(
\gamma)\right) \\
& +   \int\limits_{\XXX} \sum_{x \in \gamma}  b(y,x) (F(\gamma \,\cup
y ) - F(\gamma)) \bar{\mathfrak{m}}(dy).
 \end{aligned}
 \end{align}
It is worth noting that formulae \eqref{nb} imply that the second terms in the right-hand sides of equations \eqref{generator} and \eqref{generator-bis} are the same. The transition to the new measure $\bar{\mathfrak{m}}(dy)$ and to the new birth rate $b(x,y)$ is an analogue of the "ground state" transformation in quantum mechanics.

In our work, we always assume that the critical regime condition \eqref{2a} is satisfied.
Therefore, in what follows we will consider the generator $L$ given by the formula \eqref{generator-bis}, where $b$ and $V$ satisfy \eqref{cr}.
\medskip

Now we are ready to formulate conditions on the birth and death rates of the generator \eqref{generator-bis}.
We assume that the birth $b(y,x)$ and the death $V(x)$ rates of the contact process satisfy the following conditions:
\begin{enumerate}
\item{\it Measurability } condition:  $b:\XXX \times \XXX \to [0, \infty)$ is a non-negative bounded measurable function, and $V: \XXX \to (0, \infty)$ is a  strictly positive bounded measurable function
  \begin{equation}\label{V}
  0 \ < \  V_{{\rm min}} = \min\limits_{\XXX} V(x) \ \le \ V(x) \ \le  \ \max\limits_{\XXX} V(x) = V_{{\rm max}} \ < \ \infty;
\end{equation}
\item{\it Regularity} condition: there exists a constant $C>0$, such that
\begin{equation}\label{2a-bis}
\sup\limits_{x \in \XXX} \ \int\limits_{\XXX} b(y,x) \bar{\mathfrak{m}}( dy) \ < \ C;
\end{equation}
\item {\it Critical regime} condition:
\begin{equation}\label{cr-bis}
 \int\limits_{\XXX} b(x,y)  \bar{\mathfrak{m}}(dy) \ = \ V(x) \quad \mbox{for all } x \in \XXX,
\end{equation}
\item {\it Transience} condition.
Let us consider the continuous time jump Markov process with generator
\begin{equation}\label{L}
\mathcal{L} f(x) =  \int\limits_{\XXX} b(x,y) \big( f(y) - f(x) \big)  \bar{\mathfrak{m}}(dy).
\end{equation}
Then we assume that for two independent copies $X(t)$ and $Y(t)$ of this process starting with $X(0)=x$ and $Y(0)=y$ the following condition holds
\begin{equation}\label{2b}
\sup\limits_{x,y} \ \int_{0}^{\infty} \mathbb{E}_{x,y} b(X(t), Y(t)) dt < H
\end{equation}
with a constant $H>0$. Moreover, we assume that the integral in \eqref{2b} converges uniformly in $x$, $y$.
\end{enumerate}
\medskip
Note that
$$
\frac{b(x,y) \bar{\mathfrak{m}}(dy)}{V(x)} = \frac{a(x,y) \Psi(y) m(dy)}{\Psi(x) V(x)}
$$
defines the distribution of ancestors.

\begin{remark}\label{R2}
The sufficient condition for \eqref{2b} together with required uniform convergence reads
\begin{equation}\label{suf-cond}
\int\limits_0^{\infty}  \sup\limits_{x, y} \ \mathbb{E}_x b(X(t), y) dt < H
\end{equation}
\end{remark}

\begin{proof} Denote by  $p (x,dy,t)$ the transition function of the Markov jump process with generator \eqref{L} at time $t$. Then we get
$$
\sup\limits_{x,y} \ \int\limits_{0}^{\infty} \mathbb{E}_{x,y} b(X(t), Y(t)) dt =
\sup\limits_{x,y} \ \int\limits_0^{\infty} \int\limits_{\XXX} \int\limits_{\XXX} b(x', y') p(x, dx',t) p(y, dy',t) dt \le
$$
$$
\sup\limits_{y} \ \int\limits_0^{\infty} \int\limits_{\XXX}  \Big( \sup\limits_{x} \  \int\limits_{\XXX}  b(x', y') p(x, dx', t) \Big)  p(y, dy',t)  dt =
$$
$$
\sup\limits_{y} \ \int\limits_0^{\infty}  \int\limits_{\XXX}  \ \Big( \sup\limits_{x} \ \mathbb{E}_x b(X(t), y')  \Big)  p(y, dy',t) dt \le
$$
$$
\int\limits_0^{\infty} \sup\limits_{y} \ \int\limits_{\XXX}  \Big( \sup\limits_{y'} \sup\limits_{x} \ \mathbb{E}_x b(X(t), y') \Big)  p(y, dy',t) dt =
\int\limits_0^{\infty}  \sup\limits_{x, y'} \ \mathbb{E}_x b(X(t), y') dt.
$$
Therefore, condition \eqref{suf-cond} implies the uniform convergence in \eqref{2b}.
\end{proof}

\section{Time evolution of correlation functions. Main results}


Denote by ${\cal M}_{fm}(\Gamma)$ the set of all probability
measures $\mu$ which have finite local moments of all orders, i.e.
$$
\int_{\Gamma} |\gamma \cap \Lambda |^{n} \ \mu (d \gamma) \ < \ \infty
$$
for  all $\Lambda \in {\cal B}_b(\XXX)$ and $n \in N$.

Together with the configuration space $\Gamma$ we define the space  $\Gamma_0$ of finite configurations, and let $\Gamma^{(n)}_{0,\Lambda} = \{\eta \subset \Lambda: \ |\eta| = n  \}$ be the set of $n$-point configurations in $\Lambda \in {\cal B}_b(\XXX)$.
If a measure $ \mu \in {\cal M}_{fm}(\Gamma)$ is locally absolutely continuous with respect  to the Lebesque-Poisson measure
$$
\lambda_{z, \, \bar{\mathfrak{m}}} \ = \ \sum\limits_{n=0}^{\infty} \frac{z^n}{n!} \, {\bar{\mathfrak{m}}}^{\otimes n}, \quad \mbox{i.e. } \;
\lambda_{z, \, \bar{\mathfrak{m}}}(\Gamma^{(n)}_{0,\Lambda}) \ = \ \frac{z^n \,(\bar{\mathfrak{m}}(\Lambda))^n}{n!},
 \; n=0,1, \ldots,
$$
where $\bar{\mathfrak{m}}(\Lambda)= \int_{\Lambda} \bar{\mathfrak{m}}(dx)$, then there exists the corresponding system of the correlation functions, i.e.
densities of the correlation measure with respect to the Lebesque-Poisson measure.
The terminology originates in statistical mechanics, see, for instance, \cite[Ch. 4]{R}.
Denote by ${\cal M}_{\rm{corr}}(\Gamma)$  the subclass of the class ${\cal M}_{fm}(\Gamma)$ consisting of probability measures on $\Ga$ for which correlation functions exists.

The evolution equation for the system of $n$-point correlation functions corresponding to the continuous contact model in $\XXX$ has the following recurrent forms, see e.g. \cite{KKP, PZh}:
\begin{equation}\label{59}
\frac{\partial k_{t}^{(n)}}{\partial t} \ = \ \hat L_n^{\ast} k_{t}^{(n)} \
+ \ f_{t}^{(n)}, \quad n\ge 1; \qquad k_{t}^{(0)} \equiv 1,
\end{equation}
where
\begin{align}
 \begin{aligned}\label{korf}
 \hat L^{\ast}_n k_t^{(n)}(x_1, &\ldots, x_n)  =  - \Big( \sum_{i=1}^n  V(x_i) \Big) k_t^{(n)}(x_1,
\ldots, x_n) \\
& + \sum_{i=1}^n \int\limits_{\XXX} b(x_i, y) k_t^{(n)}(x_1, \ldots, x_{i-1}, y,
x_{i+1}, \ldots, x_n) \bar{\mathfrak{m}}(dy).
\end{aligned}
\end{align}
Here $f_{t}^{(n)}$ are functions on $\XXX^{n}$ defined for $n \ge 2$ by
\begin{equation}\label{f}
f_{t}^{(n)}(x_1, \ldots, x_n) \ = \ \sum_{i=1}^n  k_{t}^{(n-1)}(x_1,
\ldots,\check{x_i}, \ldots, x_n) \sum_{j\neq i} b(x_i, x_j),
\end{equation}
and  $f_{t}^{(1)} \equiv 0$. The notation $\check{x_i}$ means that this coordinate is excluded.

Let $\XX_{n} = \BB({\XXX}^n)$ be the Banach space of all measurable real-valued bounded functions on $\XXX^n$ with the $\sup$-norm.
Consider the operator $\hat L_n^{\ast}$ as an operator on the Banach space $\XX_{n}$ for any $n\geq 1$. Then it is a bounded linear operator in $\XX_{n}$, and the arguments based on the variation of parameters formula yields that
\begin{equation}\label{61A}
k_{t}^{(n)} \ = \ e^{t \hat L_n^{\ast}} k_{0}^{(n)} \ + \  \int\limits_0^t e^{(t-s) \hat
L_n^{\ast}} f_s^{(n)} \ ds,
\end{equation}
where $f_s^{(n)}$ is expressed through $k_s^{(n-1)}$ by (\ref{f}).
Thus, the solution to the Cauchy problem \eqref{59} in  $\XX_{n}$ with arbitrary initial values $k_{0}^{(n)}\in\XX_{n}$ exists and is unique provided $f_{t}^{(n)}$ is constructed recurrently via the
solution to the same Cauchy problem \eqref{59} for $n-1$.

The goal of this paper is to prove the existence of a family of invariant measures for the contact process in the critical regime generated by the operators of the form \eqref{generator-bis}.
These measures are described in terms of the corresponding correlation functions
$\{k^{(n)}\}_{n\geq 0}$ as solutions to the following system: 
\begin{equation}\label{Last}
\hat L^{\ast}_n k^{(n)} + f^{(n)}=0, \quad n \ge 1, \quad
k^{(0)}\equiv 1,
\end{equation}
where $\hat L_n^{\ast}, \, f^{(n)}$ were defined by \eqref{korf}-\eqref{f}.
In the sequel, we say that $k:\Ga_{0}\to \R$ solves the system \eqref{Last} in the Banach spaces $(\XX_{n})_{n\geq 1}$ if the corresponding $k^{(n)}\in\XX_{n}$, $n\geq 1$ solve \eqref{Last}.

The main result of the paper is the following theorem.

\begin{theorem}\label{main} {\it Assume that the birth rates $b(y,x)$ and the death rates $V(x)$ of the contact process
satisfy measurability, regularity \eqref{2a-bis}, critical regime \eqref{cr-bis} and transience \eqref{2b} conditions.
Then the following assertions hold.

{(i)} For any positive constant $\varrho >0$ there exists a probability measure $\mu^{\varrho} \in {\cal M}_{\rm{corr}}(\Gamma)$ on $\Ga$ such that its
correlation function $k_{\varrho}: \Ga_{0}\to\R_{+}$ solves (\ref{Last})  in the Banach spaces $(\XX_{n})_{n\geq 1}$, and  the corresponding system $\{k_{\varrho}^{(n)}\}_{n\geq 1}$ satisfies $k_\varrho^{(1)}\equiv \varrho$. Moreover, the following bounds hold for all $(x_1, \ldots, x_n)\in {\XXX}^{n}$
\begin{equation}\label{estimate}
k^{(n)}_\varrho (x_1, \ldots, x_n) \ \le   D  {H}^n (n!)^2 \qquad \text{with } \; D = \sum\limits_{n=1}^{\infty} \frac{(\varrho/ H)^n}{(n!)^2}
\end{equation}
where $H$ is the same constant as in  (\ref{2b}).

{(ii)} Let $\{k_{\varrho, t}^{(n)}\}_{n\geq 1}$ be the
solution to the Cauchy problem (\ref{59}) with
 initial data $k_0 = \{k_0^{(n)} \}$ corresponding to the Poisson measure $\pi_\varrho$ with intensity $\varrho$:
\begin{equation}\label{k0}
k_0^{(0)}= 1, \quad k_0^{(n)}(x_1, \ldots, x_n) = \varrho^n,  \; n\ge 1.
\end{equation}
Then
\begin{equation}\label{Th1-2}
\| k_{\varrho, t}^{(n)} \ - \ k_\varrho^{(n)} \|_{\XX_n} \ \to \ 0, \quad t \to \infty, \quad \forall n\geq 1.
\end{equation}
}
\end{theorem}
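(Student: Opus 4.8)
The plan is to solve the stationary hierarchy \eqref{Last} recursively in $n$, exploiting its triangular structure, and to read off the semigroup $e^{t\hat L_n^{\ast}}$ probabilistically. First I would observe that the critical regime condition \eqref{cr-bis} turns the death term in \eqref{korf} into the diagonal part of the jump operators: since $\int b(x_i,y)\bar{\mathfrak{m}}(dy)=V(x_i)$, one has $\hat L_n^{\ast}=\sum_{i=1}^{n}\mathcal L_i$, where $\mathcal L_i$ is the generator \eqref{L} acting on the $i$-th variable. Consequently $e^{t\hat L_n^{\ast}}$ is the conservative, positivity- and constant-preserving Markov semigroup of $n$ independent copies $X_1(t),\dots,X_n(t)$ of the process \eqref{L}, with $(e^{t\hat L_n^{\ast}}g)(x_1,\dots,x_n)=\mathbb{E}_{x_1,\dots,x_n}g(X_1(t),\dots,X_n(t))$. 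I would then define $k_\varrho^{(n)}$ recursively: set $k_\varrho^{(1)}\equiv\varrho$, which solves $\hat L_1^{\ast}k^{(1)}=0$ precisely by \eqref{cr-bis}, and for $n\ge2$ put $k_\varrho^{(n)}=\varrho^{n}+\int_0^\infty e^{u\hat L_n^{\ast}}f^{(n)}\,du$, with $f^{(n)}$ built from $k_\varrho^{(n-1)}$ via \eqref{f}. Because the constant $\varrho^n$ lies in the kernel of $\hat L_n^{\ast}$, differentiating under the integral gives $\hat L_n^{\ast}k_\varrho^{(n)}=[e^{u\hat L_n^{\ast}}f^{(n)}]_0^\infty=-f^{(n)}$, i.e. \eqref{Last}, provided the integral converges in $\XX_n$ and $e^{u\hat L_n^{\ast}}f^{(n)}\to0$ as $u\to\infty$.

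Both facts follow from the transience bound. Since $f^{(n)}(X_1(u),\dots,X_n(u))\le\|k_\varrho^{(n-1)}\|_\infty\sum_{i\neq j}b(X_i(u),X_j(u))$, the independence of the copies and \eqref{2b} yield $\int_0^\infty\|e^{u\hat L_n^{\ast}}f^{(n)}\|_{\XX_n}\,du\le n(n-1)H\,\|k_\varrho^{(n-1)}\|_\infty$, while the uniform convergence of \eqref{2b} (Remark \ref{R2}) forces the integrand to vanish at infinity. Writing $C_n:=\|k_\varrho^{(n)}\|_{\XX_n}$ this gives the recursion $C_n\le\varrho^{n}+n(n-1)H\,C_{n-1}$ with $C_1=\varrho$; the substitution $C_n=(n!)^2H^{n}d_n$ reduces it to $d_n\le(\varrho/H)^{n}/(n!)^2+\tfrac{n-1}{n}d_{n-1}\le(\varrho/H)^{n}/(n!)^2+d_{n-1}$, which telescopes to $d_n\le D$ and hence to the bound \eqref{estimate}. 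Positivity $k_\varrho^{(n)}\ge0$ is immediate from $b\ge0$, $\varrho>0$ and positivity-preservation of the semigroup.

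For assertion (ii) I would use the variation-of-parameters formula \eqref{61A} with the Poisson data \eqref{k0}. Since $e^{t\hat L_n^{\ast}}\varrho^{n}=\varrho^{n}$, the substitution $u=t-s$ gives $k_{\varrho,t}^{(n)}=\varrho^{n}+\int_0^t e^{u\hat L_n^{\ast}}f_{t-u}^{(n)}\,du$, so that
$$k_{\varrho,t}^{(n)}-k_\varrho^{(n)}=\int_0^t e^{u\hat L_n^{\ast}}\bigl(f_{t-u}^{(n)}-f^{(n)}\bigr)\,du-\int_t^\infty e^{u\hat L_n^{\ast}}f^{(n)}\,du.$$
The proof is an induction on $n$: the base $n=1$ is exact, $k_{\varrho,t}^{(1)}\equiv\varrho$; in the inductive step the second term is the tail of a convergent integral, and the first is controlled by an $\varepsilon/3$ split at a fixed $u=T$, using $f_{s}^{(n)}\to f^{(n)}$ uniformly as $s\to\infty$ (the induction hypothesis applied to $k_{\varrho,s}^{(n-1)}$) on $[0,T]$, and the uniform-in-$s$ bound $C_n$ together with \eqref{2b} to make the tail over $[T,\infty)$ small. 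This convergence simultaneously closes (i): the $k_\varrho^{(n)}$ are pointwise limits of the correlation functions of the genuine probability measures $\mu_t$ obtained by evolving $\pi_\varrho$, so since Lenard's positive-definiteness conditions are closed under pointwise limits and \eqref{estimate} controls all local moments, the family $\{k_\varrho^{(n)}\}$ is the correlation sequence of a probability measure $\mu^{\varrho}\in\mathcal M_{\mathrm{corr}}(\Gamma)$.

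The main obstacle is to make the formal semigroup manipulations rigorous under a single quantitative hypothesis: the convergence of the defining integral, the decay $e^{u\hat L_n^{\ast}}f^{(n)}\to0$, the uniform-in-$t$ bounds, and the interchange of the limit $t\to\infty$ with the time integral all rest on the transience condition \eqref{2b} and, crucially, on its \emph{uniform} convergence. A secondary, more structural difficulty is the reconstruction step in (i): one must check that Lenard positivity survives the limit despite the fast $(n!)^2$ growth in \eqref{estimate}, which guarantees only the existence (not the uniqueness or determinacy) of the measure $\mu^{\varrho}$.
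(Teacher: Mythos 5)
Your proposal is correct and follows essentially the same route as the paper: the recursive formula $k_\varrho^{(n)}=\varrho^{n}+\int_0^\infty e^{t \hat L_n^{\ast}} f^{(n)}\,dt$, the identification (via the critical regime condition \eqref{cr-bis}) of $e^{t \hat L_n^{\ast}}$ with the Markov semigroup of $n$ independent copies of the jump process \eqref{L}, so that the transience condition \eqref{2b} controls $e^{t \hat L_n^{\ast}} b(\cdot_i,\cdot_j)$, the same $H^{n}(n!)^{2}$ recursion yielding the constant $D$, the same variation-of-parameters decomposition for part (ii), and the same reconstruction of $\mu^{\varrho}$ via Lenard positivity of the evolved Poisson correlation functions passed to the limit. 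The only step you state more tersely than the paper is the decay $e^{t \hat L_n^{\ast}} f^{(n)}\to 0$ (integrability of a nonnegative function alone does not force it to vanish at infinity); the paper closes this by first showing the limit exists, using $e^{t \hat L_n^{\ast}} f^{(n)} = f^{(n)} + \hat L_n^{\ast}\int_0^{t} e^{s \hat L_n^{\ast}} f^{(n)}\,ds$ and boundedness of $\hat L_n^{\ast}$, and then ruling out a strictly positive limit by the convergence of the time integral --- a repair that your Markov semigroup (sup-norm contraction) framework supplies just as easily.
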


\medskip
The main strategy of the proof follows the same line as in \cite{PZh}. However, in the present paper we should modify some steps of the previous proof for contact processes with spatially non-homogeneous rates.

\section{The proof of Theorem \ref{main}}


For the first correlation function $k^{(1)}$ we get from \eqref{korf}  and (\ref{Last}) the following equation
\begin{equation}\label{8}
-V(x) k^{(1)}(x) + \int\limits_{\XXX} b(x,y) k^{(1)}(y) \bar{\mathfrak{m}}(dy) = 0,
\end{equation}
that can be written using the critical regime condition \eqref{cr-bis} as
\begin{equation}\label{8-bis}
\int\limits_{\XXX} b(x,y) \big( k^{(1)}(y)- k^{(1)}(x) \big) \bar{\mathfrak{m}}(dy) = 0.
\end{equation}
Clearly $k^{(1)}(x) \equiv \varrho $ is an element of $\XX_{1}$ and it solves \eqref{8-bis} (and \eqref{8}).
We notice that $\varrho$ can be interpreted as the spatial density of particles.

In the proof of the first statement of Theorem \ref{main} we use the
induction in $n\in\N$. If for any $n>1$ we succeed to
solve equation (\ref{Last}) and express $k^{(n)}$ through
$f^{(n)}$, then knowing the expression of $f^{(n)}$ through
$k^{(n-1)}$ (see (\ref{f})), we get the solution $\{k^{(n)}\}_{n\geq 1}$ to the full system
(\ref{Last}) recurrently.
%

\begin{lemma} \label{3.2}
The operator $e^{t \hat L_n^{\ast}}$, where $\hat L_n^{\ast}$ was defined in \eqref{korf}, is
positive, i.e. it maps non-negative functions to non-negative functions.
\end{lemma}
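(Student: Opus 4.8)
The plan is to show that the operator $\hat L_n^{\ast}$ decomposes as $\hat L_n^{\ast} = -M + B$, where $M$ is multiplication by the non-negative bounded function $\sum_{i=1}^n V(x_i)$ and $B$ is the integral operator (the second term in \eqref{korf}) that acts with the non-negative kernel $b$. The key structural observation is that $B$ is a positive operator, since it maps a non-negative function $g$ to $\sum_{i=1}^n \int_{\XXX} b(x_i,y)\, g(x_1,\ldots,y,\ldots,x_n)\,\bar{\mathfrak{m}}(dy)$, which is manifestly non-negative whenever $g$ is. Positivity of the semigroup should then follow from the standard trick of shifting the multiplication operator by a large constant so that the generator becomes, up to scalar multiple of the identity, a genuinely positive operator.

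Concretely, first I would fix a constant $c \ge n\,V_{\mathrm{max}}$ so that $c - \sum_{i=1}^n V(x_i) \ge 0$ uniformly on $\XXX^n$ (this uses the boundedness \eqref{V}). Then I would write $\hat L_n^{\ast} + c\,\mathrm{Id} = P$, where $P = (c\,\mathrm{Id} - M) + B$ is a sum of two positive operators: the first is multiplication by the non-negative function $c - \sum_i V(x_i)$, and the second is $B$. Hence $P$ is a positive bounded operator on $\XX_n$. Since the exponential of a positive bounded operator is positive — each term $P^k/k!$ in the norm-convergent series maps non-negative functions to non-negative functions, and the partial sums converge in operator norm, preserving the closed cone of non-negative functions — we get that $e^{tP}$ is positive for every $t \ge 0$. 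Finally, I would use $e^{t\hat L_n^{\ast}} = e^{-ct}\, e^{tP}$, and since $e^{-ct} > 0$ is a positive scalar, the product is again positive, which is the claim.

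The main point requiring care, rather than a genuine obstacle, is the justification that the exponential of a positive operator is positive: one must note that the set of non-negative functions in $\XX_n$ is a closed convex cone and that positivity is preserved under sums, non-negative scalar multiples, compositions, and norm limits, so that every partial sum $\sum_{k=0}^{N} t^k P^k/k!$ is positive and the limit inherits positivity from closedness of the cone. I would also verify that $B$ is indeed bounded on $\XX_n$ so that the operator-norm series for $e^{tP}$ converges; this is immediate from the regularity condition \eqref{2a-bis}, which bounds $\sup_x \int_{\XXX} b(x,y)\,\bar{\mathfrak{m}}(dy)$ and hence gives $\|B\| \le n\,C$. With boundedness and cone-preservation in hand, the positivity of $e^{t\hat L_n^{\ast}}$ is established.
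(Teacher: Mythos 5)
Your proof is correct, but it takes a genuinely different route from the paper. The paper works coordinate-wise: it writes $\hat L_n^{\ast}$ as a tensor product of one-coordinate operators ${\mathcal{L}}^{i} = A^{i} - V(x_i)$, applies the Trotter product formula $e^{t(A+B)} = \lim_{n\to\infty}\bigl(e^{tA/n}e^{tB/n}\bigr)^n$ to each ${\mathcal{L}}^{i}$ to get $e^{t{\mathcal{L}}^{i}}f \ge e^{-tV_{\rm max}}e^{tA^{i}}f \ge 0$, and then uses positivity of the factorization $e^{t\hat L_n^{\ast}} = \otimes_{i=1}^n e^{t{\mathcal{L}}^{i}}$. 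You instead shift the whole generator at once: $P = \hat L_n^{\ast} + c\,\mathrm{Id}$ with $c \ge n V_{\rm max}$ is a sum of two positive operators, $e^{tP}$ is positive term by term in its norm-convergent power series (using closedness of the cone of non-negative functions in the sup-norm), and $e^{t\hat L_n^{\ast}} = e^{-ct}e^{tP}$ since $c\,\mathrm{Id}$ commutes with everything. Your argument is more elementary and self-contained — it needs neither the Trotter formula nor the tensor-product structure of $\hat L_n^{\ast}$, so it would apply verbatim to any bounded generator of the form "positive integral operator minus bounded non-negative multiplication". One small correction: the boundedness of the integral part $B$ does not follow from the regularity condition \eqref{2a-bis}, which controls $\sup_x \int b(y,x)\,\bar{\mathfrak{m}}(dy)$, i.e.\ the integral of $b$ over its \emph{first} argument; the operator in \eqref{korf} integrates $b(x_i,y)$ over the \emph{second} argument $y$, and the bound you need is supplied by the critical regime condition \eqref{cr-bis}, which gives $\int_{\XXX} b(x_i,y)\,\bar{\mathfrak{m}}(dy) = V(x_i) \le V_{\rm max}$ and hence $\|B\| \le n V_{\rm max}$. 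This is a misattribution rather than a gap, since the paper has already established that $\hat L_n^{\ast}$ is bounded on $\XX_n$ before the lemma.
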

\begin{proof}
The operator
$$
A^i k^{(n)}(x_1, \ldots, x_n) \ := \ \int\limits_{\XXX} b(x_i, y) k^{(n)} (x_1, \ldots, x_{i-1}, y,
x_{i+1}, \ldots, x_n) \bar{\mathfrak{m}}(dy).
$$
is positive and bounded on $\XX_{n}$ for any $1\leq i\leq n$.
Set
\begin{align}
\begin{aligned}\label{21}
{\mathcal{L}}^{i} k^{(n)}(x_1, \ldots, x_n) \ = &\int\limits_{\XXX} b(x_i, y) k^{(n)}(x_1, \ldots, x_{i-1}, y, x_{i+1}, \ldots,
x_n) \bar{\mathfrak{m}}( dy)\\
& - V(x_i) k^{(n)}(x_1, \ldots, x_n).
\end{aligned}
\end{align}
Using the Trotter formula for the sum $A+B$ of two bounded operators:
$$
e^{t(A+B)} = \lim\limits_{n \to \infty} \Big( e^{\frac{t A}{n}} e^{\frac{t B}{n}} \Big)^n
$$
we conclude that
\begin{equation}\label{20L}
e^{t \, {\mathcal{L}}^{i} }f = \lim\limits_{n \to \infty} \Big( e^{t \frac{ A^{i}}{n}} e^{-t \frac{V}{n}} \Big)^n f \ge  e^{- t \, V_{\rm{max}}} \, e^{t \, A^{i}} f \ge 0
\end{equation}
for any non-negative function $f$. Here $V$ is the operator of multiplication on the positive bounded function $V$,
and constant $ V_{\rm{max}}$ was defined in \eqref{V}.

Representation \eqref{korf} yields
$$
e^{t \hat L_n^{\ast}} \ = \ \otimes_{i=1}^n \  e^{t \, {\mathcal{L}}^{i}}.
$$
Then taking into account that
\begin{equation}\label{20L}
\otimes_{i=1}^n \ e^{- t \, V_{\rm{max}}} \, e^{t \, A^{i}}
\end{equation}
is a positive operator, we get the desired conclusion.
\end{proof}

Let $k^{(1)}$ be any positive constant.
Next we will construct a solution to the system (\ref{Last}) satisfying estimates \eqref{estimate}.
As follows from (\ref{f}), the function $f^{(n)}$ is the sum of
functions of the form
\begin{equation}\label{32}
f_{i,j} (x_1, \ldots, x_n)  =  k^{(n-1)} (x_1,\ldots,\check{x_i},
\ldots, x_n)  b(x_i, x_j), \quad i\neq j.
\end{equation}

We suppose by induction that
$$
k^{(n-1)} (x_1, \ldots, x_{n-1}) \ \le \ K_{n-1}, \quad \text{for all } \; (x_1, \ldots, x_{n-1})\in {\XXX}^{n-1},\quad n\geq 2,
$$
where $K_n = D C^n (n!)^2$, and $D, C$ are some constants. Consequently,
\begin{equation}\label{34}
f_{i,j}(x_1, \ldots, x_n) \ \le \  K_{n-1} b(x_i, x_j),\quad (x_1, \ldots, x_{n})\in {\XXX}^{n}.
\end{equation}
Using the positivity of the operator $e^{t \hat L_n^{\ast}}$ and (\ref{34}) we have
\begin{align}
\begin{aligned}\label{36}
\left(e^{t \hat L_n^{\ast}} f_{i,j} \right) (x_1, \ldots, x_{n})\ \le \  K_{n-1} \ \left(e^{t \hat L_n^{\ast}}
b(\cdot_i, \cdot_j) \right)(x_1, \ldots, x_{n}).
\end{aligned}
\end{align}
Using the critical regime condition \eqref{cr-bis} we conclude that $e^{t {\mathcal{L}}^{i}}1\!\!1=1\!\!1$, $\forall i=1,\,\ldots, n,$ where ${\mathcal{L}}$ was introduced by \eqref{21} and
 $1\!\!1(x)\equiv 1$. Thus we get
\begin{equation}\label{36_1}
\left(e^{t \hat L_n^{\ast}}
b(\cdot_i, \cdot_j)\right) (x_1, \ldots, x_{n})\  =  \
\left(e^{t ( {\mathcal{L}}^i + {\mathcal{L}}^j)} b(\cdot_i, \cdot_j)\right) (x_1, \ldots, x_{n}).
\end{equation}
Note that the latter function depends only on variables $x_{i}$ and $x_{j}$.
\medskip

Notice that $e^{t \hat L_n^{\ast}} f_{i,j}$ is integrable with respect to $t$ on $\R_{+}$. Indeed, relations  \eqref{20L}, \eqref{32}, condition \eqref{2b} and the identity
\begin{equation}\label{2BB}
 e^{t \hat L_n^{\ast}} b(x,y)   =   \mathbb{E}_{x,y} b(X(t), Y(t))
\end{equation}
imply that
\begin{equation}\label{vij}
v^{(n)}_{i,j} \ = \ \int_0^{\infty} e^{t \hat L_n^{\ast}} f_{i,j} \ dt \
\le  K_{n-1} \, H,
\end{equation}
where $H$ is the same constant as in \eqref{2b}.
\medskip

Starting from now, the proof of the main result completely repeats the reasoning given in the proof of Theorem 3.1 from \cite{PZh}. We present next steps of the proof here for the reader's convenience.
We denote
\begin{equation}\label{k-def}
v^{(n)}  = \sum_{i \neq j} v^{(n)}_{i,j}=\int_0^{\infty} e^{t \hat L_n^{\ast}} f^{(n)} dt, \qquad f^{(n)} = \sum_{i \neq j} f_{i,j},
\end{equation}
where $f_{i,j}$ was defined by \eqref{32}. Next we prove that function $v^{(n)}$ is  a solution to \eqref{Last} in $\XX_{n}$.
It is easily seen from \eqref{vij} and induction procedure that $v^{(n)}\in\XX_{n}$. Since $e^{t \hat L_n^{\ast}}$ is a strongly continuous semigroup we have
\begin{equation}\label{37A}
e^{t \hat L_n^{\ast}}f^{(n)}-f^{(n)}=\hat L_n^{\ast}\int_{0}^{t}e^{s \hat L_n^{\ast}}f^{(n)}ds.
\end{equation}
Rewrite \eqref{37A} as
\begin{equation}\label{37A-bis}
e^{t \hat L_n^{\ast}}f^{(n)} = f^{(n)}+ \hat L_n^{\ast}\int_{0}^{t}e^{s \hat L_n^{\ast}}f^{(n)}ds.
\end{equation}
Then using condition \eqref{2b}, inequality \eqref{34}, Lemma \ref{3.2} and the fact that $\hat L_n^{\ast}$ is a bounded operator we conclude that the right hand side of \eqref{37A-bis} has a uniform in $x_1, \ldots, x_n$ limit as $t \to \infty$, therefore, the left hand side of \eqref{37A-bis}, i.e. $ e^{t \hat L_n^{\ast}}f^{(n)}$, also converges in $\XX_{n}$. Moreover the limit is a nonnegative function in  $\XX_{n}$. However, if this function is somewhere strictly positive, then we get a contradiction with condition \eqref{2b}, since in this case the integration over $t$ will be unbounded. Thus, we conclude that the following limit holds in $\XX_{n}$:
\begin{equation}\label{2c-bis}
 e^{t \hat L_n^{\ast}}f^{(n)} \to 0, \quad t\to \infty.
\end{equation}
A passage to the limit in \eqref{37A}  as $t\to\infty$ together with \eqref{2c-bis} shows  that $v^{(n)}$ defined in \eqref{k-def} can be taken as a solution $k^{(n)}$ to \eqref{Last} in $\XX_{n}$.

Since the function $f^{(n)}$ is the sum
of functions $f_{i,j}$, $i\neq j$ we deduce from \eqref{vij} that $ v^{(n)}$
is bounded by $n^2  K_{n-1} H$. Thus we get the recurrence inequality
\begin{equation}\label{50}
K_n \ \le \ n^2 K_{n-1} H,
\end{equation}
and by induction it follows that
\begin{equation}\label{49}
K_n \ \le \ H^n \, (n!)^2 \, k^{(1)}.
\end{equation}
Thus this solution $k^{(n)} = v^{(n)}$ satisfies estimate
\begin{equation}\label{49A}
v^{(n)} (x_1, \ldots, x_n) \ \le \ H^n \, (n!)^2 \,  k^{(1)}.
\end{equation}

Of course, any family of function of the form
$$
k^{(1)}\equiv \varrho,\quad k^{(n)}   =  \int\limits_0^{\infty} e^{t \hat
L_n^{\ast}} f^{(n)} \ dt  + A_n,\quad n \geq 2,
$$
with an arbitrary constant  $A_n$ is a solution to the system
(\ref{Last}) too.  Here $ f^{(n)}$ is defined as above with the help of \eqref{32}.  Taking $A_n=\varrho^n$ we conclude that
\begin{equation}\label{52}
k^{(1)}_\varrho\equiv\varrho,\quad k^{(n)}_\varrho = \int\limits_0^{\infty} e^{t \hat L_n^{\ast}} f^{(n)}dt  +  \varrho^n,\quad n\geq 2,
\end{equation}
is the desired solution to the stationary problem \eqref{Last} in the Banach spaces $(\XX_{n})_{n\geq 1}$.
To emphasize the dependence of $f^{(n)}$ on $\varrho$, we will use notation $f_{\varrho}^{(n)}$  for $f^{(n)}$.
For the solutions $\{ k_{\varrho}^{(n)} \}_{n\geq 1}$ of \eqref{52} instead of (\ref{50}) we have the recurrence
\begin{equation}\label{53}
K_n \ \le \  n^2 K_{n-1} H \ + \ \varrho^n.
\end{equation}
Taking $L_n = \frac{K_n}{H^n (n!)^2}$ we get from \eqref{53}
$$
L_n \le L_{n-1}+ \frac{\varrho^n}{H^n (n!)^2} \le D \quad \forall \; n=1,2, \ldots; \qquad L_0=0.
$$
This yields
\begin{equation}\label{55}
K_n \ \le \ D H^n (n!)^2 \qquad \mbox{with} \; D = \sum\limits_{n=1}^{\infty} \frac{(\varrho / H)^n}{(n!)^2}.
\end{equation}
\medskip

To be certain that the constructed system $\{ k_{\varrho}^{(n)} \}_{n\geq 1}$ is a system of correlation functions, i.e. it corresponds to a probability measure $\mu^\varrho$ on the configuration space $\Gamma$, we will
prove below that $\{ k_{\varrho}^{(n)} \}_{n\geq 1}$ can be constructed as the limit when $t \to \infty$ of the system
of correlation functions $\{ k_{t}^{(n)}\}_{n\geq 1}$ associated with the solution to the Cauchy problem \eqref{59} with
the initial data \eqref{k0}.


We recall that by the variation of parameters formula we have relation \eqref{61A} for the solution to the Cauchy problem (\ref{59}).
On the other hand, we proved above the existence of the solution $\{ k_{\varrho}^{(n)} \}_{n\geq 1}$ of the stationary problem:
\begin{equation}\label{New1}
 \hat L_n^{\ast}
k_\varrho^{(n)} \ = \ - f_\varrho^{(n)},
\end{equation}
with
$$
f_\varrho^{(n)}(x_1, \ldots, x_n) \ = \ \sum_{i,j:\ i\neq j}
k_\varrho^{(n-1)}(x_1, \ldots,\check{x_i}, \ldots, x_n) \ b(x_i, x_j).
$$
This solution is given by formula \eqref{52}, and
\eqref{New1} implies the following relation
\begin{equation}\label{New2}
\left( e^{t \hat L_n^{\ast}} -  1 \right) k_\varrho^{(n)} \ = \ -
\int\limits_0^t \frac{d}{ds} e^{(t-s) \hat L_n^{\ast}} k_\varrho^{(n)} ds \
\ = \ - \int\limits_0^t e^{(t-s) \hat L_n^{\ast}}  f_\varrho^{(n)} \ ds.
\end{equation}
Therefore from \eqref{61A} and \eqref{New2} we obtain
\begin{equation}\label{64}
k_{t}^{(n)} -  k_\varrho^{(n)} \ = \
e^{t \hat L_n^{\ast}}(k_{0}^{(n)} - k_\varrho^{(n)}) \ + \  \int\limits_0^t
e^{(t-s) \hat L_n^{\ast}} (f_{s}^{(n)} -  f_\varrho^{(n)}) \ ds.
\end{equation}
We will prove now that both terms in the right-hand side of (\ref{64}) converge to 0 in
the norm of $\XX_n$ as $t \to \infty$.

Formula (\ref{52}) yields
\begin{equation}\label{65}
e^{t \hat L_n^{\ast}} \big( k_{0}^{(n)} - k_\varrho^{(n)} \big) \ = \ - e^{t \hat
L_n^{\ast}} v^{(n)},
\end{equation}
where according \eqref{k-def} we have
\begin{equation*}\label{66}
v^{(n)} \ = \ \int_0^{\infty} e^{s \hat L_n^{\ast}} f_\varrho^{(n)}
\ ds.
\end{equation*}
Consequently, the first term in the r.h.s. of  \eqref{64} can be rewritten using \eqref{65} and \eqref{k-def} as follows
\begin{equation*}\label{1termA}
e^{t \hat L_n^{\ast}} \ v^{(n)} = \int_{0}^{\infty}e^{(t+s) \hat L_n^{\ast}}f_\varrho^{(n)} \, ds =  \int_{t}^{\infty}e^{r \hat L_n^{\ast}}f_\varrho^{(n)} \, dr.
\end{equation*}
Due to the structure \eqref{32} of the function $f_\varrho^{(n)}$ and the uniform convergence of the integral in \eqref{2b} we conclude that
\begin{equation}\label{1term}
||e^{t \hat L_n^{\ast}} \ v^{(n)}||_{{\XX}_n}\to 0,\quad t\to\infty.
\end{equation}
The second term in the r.h.s. of   \eqref{64} also tends to 0, and it can be proved in the same way as in our previous works, see e.g. \cite{KPZh}.

Thus we proved the strong convergence (\ref{Th1-2}), and the proof of the second part of Theorem \ref{main} is completed.
\medskip

Now we go back to the first part of the Theorem \ref{main}, and the final step of the proof is to show that the system of correlation functions $\{k_\varrho^{(n)} \}_{n \ge 1}$ corresponds to a probability measure $\mu^\varrho$ on the configuration space $\Gamma$. For this  we have constructed above $k_\varrho^{(n)}$ as the limit when $t \to \infty$ of solution $ k_{t}^{(n)} $
of the Cauchy problem (\ref{59}) with initial data (\ref{k0}):
\begin{equation}\label{limk}
k^{(n)}_\varrho \ = \ \lim_{t\to\infty} k_{t}^{(n)}.
\end{equation}
Then one can prove that solution $ k_{t}^{(n)}$
of the Cauchy problem satisfies the Lenard positivity and the moment growth conditions, see \cite{L1}-\cite{L2}.
The detailed proof of this fact can be found in \cite{KKPZ}.
Finally, these conditions imply that for any  $\varrho >0$  there exists a unique probability measure $\mu^\varrho \in {\cal M}_{\rm{corr}}(\Gamma)$, whose correlation functions are $\{ k^{(n)}_\varrho\}_{n \ge 1}$.
This completed the proof of Theorem \ref{main}.

\section{A multi-species (marked) continuous contact model in the critical regime.}\label{MS}

In this section we consider a continuous contact model where each element of the configuration is characterized by its location in the space as well as its mark. This model with constant mortality rates $V(x) \equiv 1$ was considered in \cite{KPZh, PZh-MMJ}, in the former paper the critical regime was studied, in the latter one, the subcritical regime.

The configuration  $\ga\in\Ga(\XXX)$ is a finite or countably infinity locally finite unordered set of points in $\XXX$, where $\XXX= \mathbb{R}^d \times S$ and $S$ is a compact metric space (the space of marks). The measure $m$ is taken in the form $m=l\otimes \nu$, where $l(dy) = dy$ is the Lebesgue measure on $ \mathbb{R}^d$ and $\nu(ds)$ is a finite Borel measure on $S$. We will use notations $x=(\xi, s), \; \xi \in  \mathbb{R}^d, \ s \in S$, for points $x \in \XXX$. We take the birth rates $a(x,x')$ in the following form
$$
a(x, x') \ = \ \alpha(\xi - \xi') Q(s,s'),
$$
where $\alpha(\cdot) \ge 0$ is a bounded measurable function satisfying normalization condition
\begin{equation}\label{norm}
\int_{\mathbb{R}^d} \alpha(u) du =1,
\end{equation}
and $Q: S \times S \to {\mathbb{R}}_+$ is a continuous (and so bounded) strictly positive function. For the death rate we assume that $V(\xi,s)= v(s)$, i.e. $V(x)$ depends only on the mark variable, and $v: S \to {\mathbb{R}}_+ $ is also a continuous strictly positive function. Consequently, the Krein-Rutman theorem implies that there are a positive number $r>0$ and a strictly positive continuous function $q(s)$ on $S$, such that
\begin{equation}\label{rq}
\int\limits_S \frac{Q(s, s')}{v(s)} q(s') \nu(d s') \ = \ r q(s)
\end{equation}
with $\; 0<q_{min} \le q(s) \le q_{max}<\infty$. If we suppose that $\Psi(x)$ depends only on $s \in S$,
then the critical regime condition \eqref{2a} is equivalent to the condition that
the compact positive operator $\tilde Q$ with the kernel $\frac{Q(s,s')}{v(s)}$ has the maximal positive eigenvalue $r=1$, i.e.
equality \eqref{rq} holds with $r=1$:
\begin{equation}\label{rq-1}
\int\limits_S \frac{Q(s, s')}{v(s)} q(s') \nu(d s') \ = \  q(s),
\end{equation}
and $\Psi(x) = q(s)$.

\begin{theorem}\label{T-multi}
Let $d \ge 3$, the birth rates have the form
$$
a(x, x') \ = \ \alpha(\xi - \xi') Q(s,s'), \quad \xi \in \mathbb{R}^d,  \;\; s \in S,
$$
where $\alpha(\cdot) \ge 0$ is a bounded measurable function satisfying condition \eqref{norm},
$Q: S \times S \to {\mathbb{R}}_+$ is a continuous strictly positive function. We assume that $V(\xi,s)= v(s)$ and $v: S \to {\mathbb{R}}_+ $ is a continuous strictly positive function:
$$
0< V_{min} \le v(s) \le V_{max} < \infty.
$$

Let the critical regime condition \eqref{rq-1} be true with $\int_S q(s) \nu (ds) =1$, and  $\Psi(x) = q(s)$.
Then all conditions of Theorem \ref{main} are fulfilled and, consequently, for any $\varrho >0$ there exists an invariant measure $\mu^\varrho$ whose correlation functions (w.r.t. the Lebesgue-Poisson measure with intensity $m(dy)$) satisfy the following estimates
\begin{equation}\label{T2-1}
k^{(n)}_\varrho (x_1, \ldots, x_n) \ \le \  D \, H^n \, (n!)^2 \, \prod\limits_{i=1}^n q(s_i)
\quad  \text{for all} \quad (x_1, \ldots, x_n)\in {\XXX}^{n},
\end{equation}
where $D$ was defined by \eqref{estimate}, and a constant $H$ is defined in the same way as in (\ref{2b}).

{(ii)} Moreover, the correlation functions $ k_\varrho^{(n)}, \, n =1,2,\ldots $, of the invariant measure $\mu^\varrho$ for any $\varrho>0$ can be constructed as the limit of correlation functions of the Cauchy problem with corresponding initial data. Indeed, for any $n=1,2, \ldots,$ the solution $k^{(n)}(t)$ of the Cauchy problem (\ref{59}) with initial data
\begin{equation*}\label{T2-k0}
k_0^{(0)}= 1, \quad k_0^{(n)}(x_1, \ldots, x_n) = \varrho^n \,\prod\limits_{i=1}^n q(s_i),
\end{equation*}
converges to the solution of the system \eqref{Last} of stationary (time-independent) equations as $t \to \infty$:
\begin{equation}\label{Th2-2}
\| k^{(n)}(t) \ - \ k_\varrho^{(n)} \|_{\XX_n} \ \to \ 0, \quad t \to \infty.
\end{equation}
\end{theorem}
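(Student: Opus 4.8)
The plan is to obtain Theorem \ref{T-multi} as a direct corollary of Theorem \ref{main}: I would check that the concrete marked model, after the ground-state transformation, satisfies the four hypotheses (measurability, regularity \eqref{2a-bis}, critical regime \eqref{cr-bis}, transience \eqref{2b}), and then read off \eqref{T2-1} and \eqref{Th2-2} from the conclusions of Theorem \ref{main}. With $\Psi(x)=q(s)$ and the Krein--Rutman eigenfunction $q$ normalised by $\int_S q(s)\nu(ds)=1$, formulae \eqref{nb} give the transformed reference measure $\bar{\mathfrak{m}}(d\eta\,ds')=q(s')\,d\eta\,\nu(ds')$ and the transformed birth rate $b(x,y)=\alpha(\xi-\eta)Q(s,s')/q(s)$, where $x=(\xi,s)$, $y=(\eta,s')$. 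I would first note that the construction is insensitive to the normalisation of $q$, since the product $b(x,y)\bar{\mathfrak{m}}(dy)$ is invariant under $q\mapsto cq$.

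Conditions 1--3 are then immediate computations. \emph{Measurability} holds because $\alpha$ is bounded measurable, $Q$ is continuous (hence bounded) and $q\ge q_{min}>0$, so $b$ is bounded by $\|\alpha\|_\infty Q_{max}/q_{min}$, while $v$ satisfies \eqref{V} by hypothesis. For the \emph{regularity} bound \eqref{2a-bis}, the factor $q(s')$ in $\bar{\mathfrak{m}}$ cancels the $1/q(s')$ in $b(y,x)$ and, using $\int_{\mathbb R^d}\alpha=1$, one gets $\int_\XXX b(y,x)\bar{\mathfrak{m}}(dy)=\int_S Q(s',s)\nu(ds')\le Q_{max}\,\nu(S)$ uniformly in $x$. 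For the \emph{critical regime} \eqref{cr-bis}, the normalisation of $\alpha$ reduces the integral to $\frac{1}{q(s)}\int_S Q(s,s')q(s')\nu(ds')$, which equals $v(s)=V(x)$ precisely by the eigenvalue equation \eqref{rq-1}.

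The \emph{transience} condition \eqref{2b} is the main obstacle. The generator \eqref{L} here describes a jump process on $\mathbb R^d\times S$ whose total jump rate out of $(\xi,s)$ equals $v(s)\in[V_{min},V_{max}]$ and whose one-step law factorises, since $b(x,y)\bar{\mathfrak{m}}(dy)/v(s)=\big[\alpha(\xi-\eta)\,d\eta\big]\cdot\big[\frac{Q(s,s')q(s')}{v(s)q(s)}\nu(ds')\big]$, a product of two probability kernels. Thus the spatial increments are i.i.d. with the dispersal density $\alpha$ and are independent of the mark chain and of the jump times, so the spatial component $\Xi(t)$ is a random walk subordinated to a clock with rate in $[V_{min},V_{max}]$. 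Using Remark \ref{R2} and bounding the mark factor $Q(\Sigma(t),s')/q(\Sigma(t))\le Q_{max}/q_{min}$, the problem reduces to $\int_0^\infty \sup_{\xi,\eta}\mathbb E_\xi\,\alpha(\Xi(t)-\eta)\,dt<\infty$. The crude bound $\mathbb E_\xi\alpha(\Xi(t)-\eta)\le\|\alpha\|_\infty$ is not integrable, so the substance is the spreading of the walk: conditionally on $N(t)=N$ jumps the relevant density is $\alpha^{*(N+1)}$, whose supremum decays like $N^{-d/2}$ by a local central limit estimate, and since $N(t)$ grows linearly this gives $\sup_{\xi,\eta}\mathbb E_\xi\alpha(\Xi(t)-\eta)=O(t^{-d/2})$; the $t$-integral converges exactly when $d\ge 3$. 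This transience of the spatial walk is the technically delicate step and is where the dimension restriction enters, handled as in \cite{KKP, KKPZ}; uniformity in the marks is automatic since all mark-dependent factors are bounded above and below.

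Finally, once the four hypotheses hold, Theorem \ref{main} produces for each $\varrho>0$ a family $\{\bar k_\varrho^{(n)}\}$ of correlation functions with respect to $\bar{\mathfrak{m}}$ satisfying $\bar k_\varrho^{(n)}\le D H^n(n!)^2$ together with the convergence \eqref{Th1-2}. It remains to pass back from the reference measure $\bar{\mathfrak{m}}$ to $m$. Since $\bar{\mathfrak{m}}(dy)=\Psi(y)m(dy)$ with $\Psi(x)=q(s)$, the correlation measure is intrinsic and the densities transform by $k^{(n)}_\varrho(x_1,\ldots,x_n)=\bar k^{(n)}_\varrho(x_1,\ldots,x_n)\prod_{i=1}^n q(s_i)$, turning the bound $D H^n(n!)^2$ into \eqref{T2-1}. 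The same relation identifies the $\bar{\mathfrak{m}}$-Poisson measure of intensity $\varrho$ (for which $\bar k_0^{(n)}\equiv\varrho^n$) with the $m$-correlation initial data $\varrho^n\prod_i q(s_i)$ of part (ii), so \eqref{Th1-2} becomes exactly \eqref{Th2-2}. The Lenard positivity and moment bounds needed to recognise $\{k^{(n)}_\varrho\}$ as the correlation functions of a genuine probability measure $\mu^\varrho$ are inherited from Theorem \ref{main}.
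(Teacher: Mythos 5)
Your proposal follows essentially the same route as the paper: perform the ground-state transformation with $\Psi(x)=q(s)$, verify measurability, regularity and the critical regime by the direct computations you indicate, reduce transience via Remark \ref{R2} and the boundedness of the mark factors to the decay of $\sup_{\xi,\eta}\mathbb{E}_\xi\,\alpha(\Xi(t)-\eta)$, obtain that decay by combining a local-CLT bound $\sup\alpha^{\ast n}\le K n^{-d/2}$ with linear growth of the jump counter, and finally transfer the bounds from $\bar{\mathfrak{m}}$ back to $m$, which produces the factor $\prod_i q(s_i)$ in \eqref{T2-1} and identifies the initial data of part (ii). All of these steps are correct and are exactly the paper's Lemma \ref{main-tr}, Lemma \ref{Eb-L1} and Lemma \ref{Eb-L3}, together with the (implicit in the paper, explicit in your write-up) change of reference measure.

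The one place where you under-justify is precisely the step the paper had to work for: ``since $N(t)$ grows linearly.'' When $v$ is non-constant, the spatial component $\xi(t)$ is not a Markov process and the waiting times between jumps are exponentials whose rates $v(s_1),v(s_2),\dots$ are driven by the mark chain, hence neither independent nor identically distributed. For this reason the step cannot simply be ``handled as in \cite{KKP, KKPZ}'': those papers treat constant death rate, where the clock is genuinely Poisson. The paper closes this gap with Proposition \ref{Prop5}, the stochastic domination
$P_{v(\cdot)}(n(t)\le k)\ \le\ P_{\lambda_0}(n(t)\le k)$ with $\lambda_0=V_{\min}$,
proved by comparing convolutions of exponential distribution functions conditionally on the trajectory of the mark chain, and then splits the sum over the number of jumps at $[\tfrac12\lambda_0 t]$, using the Poisson tail (via Stirling) for the small-$n$ part and $\alpha^{\ast n}\le K n^{-d/2}$ for the large-$n$ part. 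Your intuition --- the clock rate is pinned in $[V_{\min},V_{\max}]$ --- is exactly what drives this domination, but it is the genuinely new ingredient of this section rather than an import from the constant-rate case, and a complete proof must supply it.
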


\begin{proof}
For the proof, we consider the new measure
$
\bar{\mathfrak{m}}(dy) = \Psi(y) m(dy)
$
and the new  birth rates
$
b(x,y) = \frac{a(x,y)}{\Psi(x)},
$
given by formulas \eqref{nb} with  $\Psi(\xi, s) =  q (s)$, and then we  must check that all conditions of Theorem \ref{main} for $b$ and $\bar{\mathfrak{m}}(dy)$ are fulfilled.

The measurability and the regularity conditions are valid due to the boundedness of $\Psi$. Thus it remains to estimate ${\mathbb{E}}_x b(X(t), y)$ from above. Then bound \eqref{suf-cond} will imply the transience condition \eqref{2b}. Next Lemma guarantees the convergence of the integral in \eqref{suf-cond} in the case when $d \ge 3$.

\begin{lemma}\label{main-tr}
For all $t>0$ the following uniform upper bound holds
\begin{equation}\label{Eb-0}
{\mathbb{E}}_x b(X(t), y) \le \min \big\{ \| b \|_{\infty}, \ \frac{C}{t^{d/2}}  \big\}
\end{equation}
with a positive constant $C$.
\end{lemma}

\begin{proof}
Let us consider a Markov jump process on $\XXX$ with generator
\begin{equation*}\label{L}
\mathcal{L} f(x)\  =  \ \int\limits_{\XXX} b(x,y) \big( f(y) - f(x) \big)  \bar{\mathfrak{m}}(dy),
\end{equation*}
where $b(x,y)$ and $ \bar{\mathfrak{m}}(dy)$ were defined by \eqref{nb}.
The generator $L$ can be rewritten as
\begin{equation}\label{LV}
\mathcal{L} f(x) \ = \ V(x) \, \int\limits_{\XXX} \frac{b(x,y)}{V(x)} \big( f(y) - f(x) \big)  \bar{\mathfrak{m}}(dy).
\end{equation}
Using the critical regime condition \eqref{rq-1} with  $\Psi(\xi, s) =  q (s)$ we obtain
$$
 \int\limits_{\XXX} \frac{b(x,y)}{V(x)}\, \bar{\mathfrak{m}}(dy) = \int\limits_{\mathbb{R}^d} \int\limits_{S} \frac{ \alpha(\xi - \xi')\, Q(s, s') \, q(s')}{ v(s) \, q(s)} \,  d \xi' \, \nu(ds')  = 1.
$$
Consequently, the Markov jump process $X(t)$ with generator \eqref{LV} can be described in the following form:
starting with the state $X(0)=(\xi, s)$ the process jumps with the intensity $v(s)$, and the distribution of the new position of $X(t)$ has the density $ \frac{ \alpha(\xi - \xi')\, Q(s, s') \, q(s')}{ v(s) \, q(s)}$ with respect to the measure $l \otimes \nu$. Thus the coordinates $\xi'$ and $s'$ are conditionally independent under the initial condition $X(0)=(\xi, s)$, and we can write $X(t) = (\xi(t), \ s(t))$.

Denote by $\Theta(s,s')  =  \frac{ Q(s, s') \, q(s')}{ v(s) \, q(s)}$. Then the critical regime condition \eqref{rq-1} yields
\begin{equation}\label{tildeQ}
\int\limits_S \Theta(s,s') \,  \nu(ds')  = 1.
\end{equation}
The second component $s(t)$ of $X(t)$ is a continuous time Markov jump process in $S$ with generator
\begin{equation}\label{LS}
 {\mathcal{L}}_S \varphi(s) \ = \ v(s) \, \int\limits_{S} \Theta (s,s') \big( \varphi(s') - \varphi(s) \big)  \nu (ds').
\end{equation}

The first component $\xi(t)$ of $X(t)$ is not a continuous time Markov process (when $ v(s) \not \equiv 1$). Let us consider the sequence $\xi(0), \ \xi(1), \ldots$, where $\xi(n) \in \mathbb{R}^d$ is the first coordinate of $X(t)$ after the $n$-th jump. Then $\xi(n)$ is the random walk in $\mathbb{R}^d$, i.e. the sum of i.i.d. random variables with the common jump distribution equal to $\alpha(u)$. But the random time intervals between the jumps are not i.i.d.  random variables.


Applying the representation \eqref{nb} for $b(X(t), y)$ and taking $x=(\xi_0, s_0), \ y = (\xi_1, s_1)$ we have
\begin{equation}\label{Eb-1}
{\mathbb{E}}_x b(X(t), y) = {\mathbb{E}}_{(\xi_0, s_0)} \frac{\alpha(\xi(t)-\xi_1) \, Q(s(t), s_1)}{q(s(t))} \le \varkappa \, {\mathbb{E}}_{\xi_0} \alpha(\xi(t)-\xi_1),
\end{equation}
where $\varkappa = \frac{\max Q(s, s')}{q_{min}}$.
Using the decomposition for the distribution of the process $\xi(t)$ to the singular and regular parts we obtain
\begin{equation}\label{Eb-2}
 {\mathbb{E}}_{\xi_0} \alpha(\xi(t)-\xi_1)= e^{-v(s_0) t} \alpha(\xi_0 - \xi_1) + \int\limits_{\mathbb{R}^d}P(t, \xi, \xi_0) \alpha(\xi - \xi_1) \, d\xi,
\end{equation}
where $P(t, \xi, \xi_0)$ is the regular part of the distribution of $\xi(t)$:
 \begin{equation}\label{Eb-3}
P(t, \xi, \xi_0) = \sum\limits_{n=1}^{\infty} \alpha^{\ast n} (\xi - \xi_0) \, p_n(t).
\end{equation}
Here $\alpha^{\ast n}$ is the $n$-fold convolution of the function $\alpha$,  $p_n(t) = \Pr (n_X(t) = n)$ is the probability that the process $X(t)$ has $n$ jumps up to time $t$.
It is worth noting that
$$
\Pr (n_X(t) =n ) = \Pr (n_s (t) = n),
$$
where $n_s(t)$ is the number of jumps of the process $s(t)$ on the time interval $[0,t]$. Later, see \eqref{Pv}, we denote by $P_{v(\cdot)}$ the measure on the space of integer-valued trajectories corresponding to the process $n_s(t)$.

We estimate  $\alpha^{\ast n}$ and $p_n(t)$ separately.

\begin{lemma}\label{Eb-L1}
Assume that $\alpha(\xi) \ge 0, \;  \alpha(\xi)\in L^1({\mathbb{R}^d}) \cap L^{\infty}({\mathbb{R}^d})$ and $\int \alpha(\xi) d\xi =1$.
Then the following upper bound is valid:
\begin{equation}\label{alpha-n}
\alpha^{\ast n}(\xi) \ \le \ \frac{K}{n^{d/2}}.
\end{equation}
\end{lemma}

\begin{proof}
Considering $\alpha(\cdot)$ as the distribution of a random variable we conclude that the corresponding characteristic function $\varphi(k)$ satisfies the following properties:
$$
\varphi \in L^2({\mathbb{R}^d}) \cap C_0({\mathbb{R}^d}), \quad \varphi(0)=1, \; |\varphi(k)|<1, \ k \neq 0,
$$
where $ C_0({\mathbb{R}^d})$ is the space of continuous functions vanishing at infinity: $|\varphi(k)| \to 0$ as $|k| \to \infty$. From the statement of Lemma 1.5 \cite{Petrov}  it follow that there exist
$\delta>0$ and $\gamma>0$ such that
 \begin{equation}\label{alpha-n-1}
 |\varphi(k)| \le e^{-\gamma\, k^2} \quad \mbox{for all } \; |k| \le \delta.
 \end{equation}
Moreover, the properties of $\varphi$ imply that
\begin{equation}\label{alpha-n-2}
 |\varphi(k)| \le C \quad \mbox{with } \; 0<C<1 \quad \mbox{for all } \; |k| > \delta.
 \end{equation}
Since $\varphi^n (k)$ is the characteristic function of $\alpha^{\ast n}(\xi)$, then using the inverse Fourier transform together with \eqref{alpha-n-1} -  \eqref{alpha-n-2} we obtain the following uniform upper bound
$$
\alpha^{\ast n}(\xi) = \frac{1}{(2 \pi)^d} \int\limits_{\mathbb{R}^d} e^{- i k u} \varphi^n (k)\, d k
$$
$$
 \le \frac{1}{(2 \pi)^d} \Big( \int\limits_{|k| \le \delta} |\varphi (k)|^n\, d k + \int\limits_{|k| > \delta} |\varphi(k)|^n\, d k \Big)
$$
$$
\le \frac{1}{(2 \pi)^d} \int\limits_{\mathbb{R}^d} e^{- \gamma k^2 n}\, d k + \frac{1}{(2 \pi)^d} \, C^{n-2} \int\limits_{\mathbb{R}^d} |\varphi(k)|^2 \, dk
$$
$$
\le \ \frac{\hat C}{n^{d/2}} \ + \ \frac{C^{n-2}}{(2 \pi)^d}\, \|\varphi\|^2_{L^2} \ \le \ \frac{K}{n^{d/2}}.
$$
Here $\hat C, K$ are constants, and $0<C<1$ is the same constant as in \eqref{alpha-n-2}.
Thus estimate \eqref{alpha-n} is proved.
\end{proof}
\medskip

Denote by $F_{\alpha_j}(t), \ j=1,2,$ distribution functions of random variables with the exponential distribution with parameters $\alpha_1>0$ and $\alpha_2>0$ respectively, i.e. $F_{\alpha_j}(t) = 1 - e^{-\alpha_j t}, \ t \ge 0$. If $\alpha_2 > \alpha_1$, then
$$
F_{\alpha_2}(t)> F_{\alpha_1}(t) \;\; \forall \ t>0.
$$
We will use notation $P_{v(s(\tau))} (n(t)\le k)= P_{v(\cdot)} (n(t)\le k)$ for the Poisson process with intensity $v(s(\tau))$ depending on the Markov jump process $s(\tau)$ with generator  \eqref{LS}.  It is clear that
\begin{equation}\label{Pv}
\Pr (n_X(t)\le k)\ = \ P_{v(\cdot)} (n(t)\le k).
\end{equation}

\begin{proposition}\label{Prop5}
Assume that $0< \lambda_0 \le v(s) \le \lambda_1 < \infty$, then
\begin{equation}\label{1}
P_{v(\cdot)} (n(t)\le k) \ \le \ P_{\lambda_0} (n(t)\le k) \; \; \forall \ k \in \mathbb{N}.
\end{equation}
\end{proposition}

\begin{proof}
Inequality \eqref{1} is equivalent to the following inequality
\begin{equation}\label{2}
P_{v(\cdot)} (n(t)\ge k) \ \ge  \ P_{\lambda_0} (n(t)\ge k) \; \; \forall \ k \in \mathbb{N}.
\end{equation}
The assumption of the Proposition implies that $F_{v(\cdot)}(t) \ge F_{\lambda_0}(t) \; \forall \ t \ge 0$.
Recall that the convolution of distribution functions is defined as follows
$$
(F_1 \ast F_2) (t) = \int\limits_{- \infty}^{+ \infty} F_1(t-x) F_2(dx).
$$
Then, using that $F_1 \ast F_2 = F_2 \ast F_1$, we conclude that for any $s_i, \ s_j$
$$
(F_{v(s_i)} \ast F_{v(s_j)})(t) \ge (F_{\lambda_0} \ast F_{v(s_j)})(t) =  (F_{v(s_j)} \ast F_{\lambda_0})(t) \ge  (F_{\lambda_0} \ast  F_{\lambda_0})(t).
$$
Taking into account \eqref{tildeQ} we obtain for any $v_1 = v(s_1), \; s_1 = s(0)$ the following inequality
$$
P_{v(\cdot)} (n(t)\ge k) = \int\limits_{S^{k-1}} \Theta(s_1, s_2) \ldots \Theta(s_{k-1}, s_k)(F_{v_1}\ast F_{v_2} \ast \ldots \ast F_{v_k})(t) \ \nu(d s_2) \ldots \nu(d s_k)
$$
$$
 \ge \  F_{\lambda_0}^{\ast k}(t)  \ =  \ P_{\lambda_0} (n(t)\ge k),
$$
where $v_j=v(s_j) \ge \lambda_0 \; \; \forall \ j = 1, \ldots, k$. We used here that for a given states $s_1, \ldots, s_k$ of the process $s(t)$ the time intervals $t_1, \ldots, t_k$ of the waiting times of the corresponding jump are conditionally independent and have exponential distributions with parameters $v(s_1), \ldots, v(s_k)$ respectively.

Thus inequalities \eqref{2} and \eqref{1} are proved.
\end{proof}

Next we exploit estimates \eqref{alpha-n} and \eqref{1} to obtain the following upper bound for the regular part $P(t, \xi, \xi_0)$ of the distribution $\xi(t)$.

\begin{lemma}\label{Eb-L3}
The following upper bound holds for $P(t, \xi, \xi_0)$ and for all $t>0$
 \begin{equation}\label{Eb-l3}
P(t, \xi, \xi_0) \ \le \ \min \big\{ 1, \  \frac{\tilde K}{t^{d/2}} \big\}
\end{equation}
\end{lemma}

\begin{proof}
To get the upper bound on $P(t, \xi, \xi_0)$ we divide the sum in \eqref{Eb-3} into two parts:
\begin{equation}\label{AAA}
\sum\limits_{n=1}^{\infty} \alpha^{\ast n} (\xi) \, p_n(t) = \sum\limits_{n=1}^{[ \frac12 \lambda_0 t ]} \alpha^{\ast n} (\xi) \, p_n(t) + \sum\limits_{n = [ \frac12 \lambda_0 t]+1}^{\infty} \alpha^{\ast n} (\xi) \, p_n(t),
\end{equation}
where $\lambda_0>0$ is the same as in Proposition \ref{Prop5}, and estimate each of the sum in the right-hand side of \eqref{AAA} separately.
The boundedness of $\alpha(\cdot)$ together with the normalization condition \eqref{norm} imply that
\begin{equation}\label{A1}
M \ = \ \sup\limits_{k \in \mathbb{N}, \, \xi \in \mathbb{R}^d} \alpha^{\ast k}(\xi) \ = \ \|\alpha \|_{\infty}
\end{equation}
and
\begin{equation}\label{A2}
\sup\limits_{\xi} \alpha^{\ast (n+k)}(\xi) \ \le \ \sup\limits_{\xi} \alpha^{\ast n}(\xi) \quad \forall \ k=1,2, \ldots,
\end{equation}
Using bound \eqref{1} and the Stirling formula we get for the first sum in \eqref{AAA}:
$$
\sum\limits_{n=1}^{[ \frac12 \lambda_0 t ]} \alpha^{\ast n} (\xi) \, p_n(t) \le M \, \sum\limits_{n=1}^{[ \frac12 \lambda_0 t ]} p_n(t) \le M \, \Pr \Big( n_X(t) \le  [ \frac12 \lambda_0 t ] \Big)
$$
\begin{equation}\label{AAA-1}
\le M \, P_{\lambda_0} \Big( n(t) \le  [ \frac12 \lambda_0 t ] \Big) = M \, \sum\limits_{n=0}^{ [\frac12 \lambda_0 t]} \frac{( \lambda_0 t)^n}{n!} \, e^{- \lambda_0 t} \le \tilde M \, t \, e^{-B \, \lambda_0 \, t}
\end{equation}
with positive $B=\frac{1- \ln 2}{2}$, and $ \tilde M = \frac12 M \lambda_0$.

To estimate the second sum in \eqref{AAA} we exploit the bounds \eqref{alpha-n},  \eqref{A2} and the inequality
$\sum\limits_{n= [ \frac12 \lambda_0 t ]+1}^{\infty} p_n(t) <1$. Then we have
\begin{equation}\label{AAA-2}
\sum\limits_{n= [ \frac12  \lambda_0 t ]+1}^{\infty} \alpha^{\ast n} (\xi) \, p_n(t) \le
\sup\limits_{n > [ \frac12  \lambda_0 \, t ]} \sup\limits_{\xi} \, \alpha^{\ast n} (\xi) \le \frac{K}{\big( 1 +\big[ \frac{\lambda_0 \, t}{2}\big] \big)^{d/2}}.
\end{equation}
Finally from \eqref{AAA-1} and \eqref{AAA-2} we obtain the statement \eqref{Eb-l3} of Lemma \ref{Eb-L3}.
\end{proof}

Collecting \eqref{Eb-1}, \eqref{Eb-2} and \eqref{Eb-l3}  we get the bound \eqref{Eb-0}. Lemma \ref{main-tr} and Theorem \ref{T-multi} are completely proved.
\end{proof}

\end{proof}

\section{Appendix}

We can include in our model a possibility to jump. The analogous model in $\X$ has been considered earlier in \cite{KKP,KKPZ}.
More precisely, let us consider the following heuristic generator
$L\ + \ L_J$, where $L$  was defined by (\ref{generator}),
\begin{equation}\label{LJ}
L_J F(\gamma)\ = \  \int\limits_{\XXX} \sum_{x\in \gamma}
J(y,x) \Big( F ((\gamma\setminus x) \cup y)-F(\gamma) \Big) \, m(dy).
\end{equation}
Suppose that the total jump rate $\int J(y,x) m(dy)$
is uniformly bounded in $x$:
\begin{equation}\label{2aJ-bis}
\sup\limits_x \ \int\limits_{\XXX} J(y,x) m( dy) \ < \ C.
\end{equation}
Then the modified critical regime condition reads
\begin{equation}\label{criJ}
\int\limits_{\XXX} \big(a(x,y)+J(x,y)\big) \Psi(y) \ m(dy) = \big( V(x) + \int\limits_{\XXX} J(y,x) m(dy) \big) \Psi(x).
\end{equation}
Again it means that if the initial density of particles is equal to $\Psi(x)$, then this density is conserved.

We take $b(x,y)$ and $\bar{\mathfrak{m}}(dy)$ in the same way as it was defied by \eqref{nb}.
Then the generator ${\mathcal{L}}_J$ of the jump process takes the form
\begin{equation}\label{LambdaJ}
{\mathcal{L}}_J f(x) =  \int\limits_{\XXX} \Big( b(x,y) + \frac{J(x,y)}{\Psi(x)} \Big) ( f(y) - f(x)) \bar{\mathfrak{m}}(dy),
\end{equation}
and "transience" condition \eqref{2b} now can be written as
\begin{equation}\label{2bJ}
\sup\limits_{x,y} \ \int\limits_{0}^{\infty} \mathbb{E}_{x,y} b( \tilde X(t), \tilde Y(t)) dt < H,
\end{equation}
where $\tilde X(t)$ and $\tilde Y(t)$ are two independent copies of the Markov process with generator ${\mathcal{L}}_J$ given by \eqref{LambdaJ}.



\end{document}